\pgfplotsset{compat=1.10}
\theoremstyle{definition}  
\newtheorem{definition}{Definition}[section] 
\newtheorem*{remark}{Remark}
\newtheorem{example}{Example}
\newtheorem{theorem}{Theorem}[section]
\newtheorem{cor}[theorem]{Corollary} 
\renewcommand*\env@matrix[1][*\c@MaxMatrixCols c]{%
  \hskip -\arraycolsep
  \let\@ifnextchar\new@ifnextchar
  \array{#1}}
\title{A Constructive Proof of NC Fejér-Riesz theorem}
\author{Palak Arora}
\date{July 5, 2022}
\begin{document}


\maketitle

\begin{abstract}
    In this paper we present a constructive proof of Popescu's non-commutative Fejér-Riesz theorem for non-commuting polynomials. We are considering non-commutating polynomial in left-creation and left-annihilation multi-Toeplitz operators.
\end{abstract}

\begin{section}{Introduction}

  The classical Fejér-Riesz states the following: if a trigonometric polynomial
\[
w(e^{it})=\sum_{j=-m}^m c_j e^{ijt}
\]
is nonnegative for all real $t$, then it is expressible in the form
\[
w(e^{it})=|p(e^{it})|^2
\] 
for some analytic polynomial $p(z)=\sum_{j=0}^m a_jz^j$. 
For proof refer to Lemma $2.1$ in \cite{paulsen_2003}. There is also an operator version where the coefficients of $w$ are matrices or operators (\cite{MR227794}). 
The Fej\'{e}r-Riesz theorem can be reformulated as a statement about Toeplitz operators: the function $w$ may be interpreted as the symbol of a {\em Toeplitz operator} $T_w$; in particular if $S$ denotes the unilateral shift on $\ell^2(\mathbb N)$ then $T_w$ is the operator defined by
\[
  T_w = c_0 I +\sum_{k=1}^m c_k S^k + \sum_{k=1}^m c_{-k} S^{*k},
\]
and then the factorization $w=|p|^2$ is equivalent to the factorization of operators
\[
  T_w=T_p^*T_p
\]
where $T_p= \sum_{k=0}^m a_kS^k=p(S)$. (The equivalence of the two formulations follows easily from the fact that $S$ is an isometry ($S^*S=I$)). It turns out this operator formulation admits a generalization, in the noncommutative setting, to so-called {\em multi-Toeplitz} operators, where the single isometry $S$ is replaced by a {\em row isometry}, for example the $d$ tuple of left shifts, $(L_1,\:L_2,\:\cdots,\:L_d)$ or $d$ tuple of right shifts, $(R_1,\:R_2,\:\cdots,\:R_d)$. Precise definitions are given in the next section. Following an idea of Dritschel and Woerdeman \cite{dritschel2005outer}, this paper develops a constructive proof of Riesz-Fejér theorem in the non commutative setting
. We have a non negative multi-Toeplitz polynomial operator
\[
T_Q\::=\:Q_0\otimes I_{\mathcal{F}^2_d}\, +\,\sum_{0<|v|\leq n} Q_v \otimes L_v  \,+\, \sum_{0<|v|\leq n} Q_v^*\otimes L_v^*.
\]
We then find the multi-Toeplitz  operator factorization of this polynomial,
\[
T_Q \: :=\: T_F^*T_F
\]
 where $T_F \: := \sum_{0\leq |v|\leq n} F_v \otimes L_v$ [This is a slight rewording of the theorem $1.6$ \cite{MR1348353}]. 
 \end{section}

\begin{section}{Preliminaries}
Let us recall a few definitions required for the following section:\newline
\begin{definition}
Let $\mathcal{F}_d^+$ denote the word set which is a monoid formed from the letters, $1,\:2,\cdots,\:d$. We say that \textbf{Fock space} is $\ell^2(\mathcal{F}_d^+)$.
\end{definition}


The Fock space, $\ell^2(\mathcal{F}_d^+)$ is the Hilbert space with orthonormal basis $\{\xi_w\}_{w\in \mathcal{F}_d^+}$. 

\begin{definition}
The \textbf{left-creation operator} $L_j$ is defined as
\begin{equation*}
L_j\xi_w=\xi_{jw}
\end{equation*}
for $j=1,\cdots,d$ and can be extended linearly. 
Similarly,  \textbf{left-annihilation operator} $L_j^*$ is defined as

\begin{equation*}
\displaystyle L_j^*\xi_{w}= \begin{cases} \xi_v, & w=jv \\ 0, & \text{otherwise}  \end{cases}
\end{equation*}
for $j=1,\cdots,d$. Thus $\{L_j|j=1,\cdots,d\}$ form a system of isometries with orthogonal ranges: 

\[L_i^*L_j=\delta_{ij}I.\]


\end{definition}\textbf{Note} that same holds for the right shift operators as well.\newline
From the above definition we get that $(L_1,\:L_2,\:\cdots\:,\:L_d)$ and $(R_1,\:R_2,\:\cdots\:,\:R_d)$ are row-isometries.\newline
For any $w\in F_d^+$, $w=i_1i_2\cdots i_n$ we denote $L_w=L_{i_1}L_{i_2}\cdots L_{i_n}$. So for any $w=i_1i_2\cdots i_n$ and $v=j_1j_2\cdots j_m$ in $F_d^+$,\, $L_w^*L_v=L_{i_n}^*\cdots L_{i_2}^*L_{i_1}^*L_{j_1}\cdots L_{j_m}$.
Thus\\
\vskip.15in
$\displaystyle L_w^*L_v= \begin{cases} L_x, &\:\mbox{if } v=wx \\ 
                                       L^*_y, &\: \mbox{if } w=xy\\ 
                                       0, &\: \mbox{otherwise}  
                                       \end{cases}$. \\

 \begin{definition}
In the classical setting, $T$ is said to be a \textbf{Toeplitz operator} if  $S^*TS = T$ where $S$ is unilateral shift. An operator $T$ is \textbf{$L$-multi-Toeplitz} if $L_i^*TL_j=\delta_{ij}T$ where $L_j$ is left-creation operator. Similarly, $T$ is called \textbf{$R$-multi-Toeplitz} if $R_i^*TR_j=\delta_{ij}T$ where $R_j$ is right-creation operator. 
 \end{definition}
       


 \hspace{4mm}
 \begin{example}
 Any left-creation operator $L_w$ is $R$-multi-Toeplitz. Since $L_i$ and $R_j$ commute with each other, $R_j$ commutes with $L_w$ for all $w$ and thus we have 
 \begin{align*}
     R_i^*L_wR_j &= R_i^* R_j L_w\:\\
             &= \delta_{ij} L_w \\
 \end{align*}
 Similarly, $L_v^*$ is $R$-multi-Toeplitz for any word $v$.\\
 Therefore for any non commutative polynomials $f,g$, we have that $f(L)^*+g(L)$ is $R$-multi-Toeplitz.
 \end{example}
 
 Next let us consider a $R$-multi-Toeplitz operator say, 
 \begin{equation}\label{main example}
     T:=\:\,\sum_{0\leq |v|\leq n} q_v  L_v  \,+\, \sum_{0<|v|\leq n} q_v^* L_v^*.
 \end{equation}
 Then corresponding to the Fock space basis $\{\xi_v\}_{v\in \mathcal{F}_d^+}$ we get its matrix representation which is a multi-Toeplitz matrix:\\

 \[ 
 \label{matrixToRefLater}
   \begin{bmatrix}
        \begin{array}{c|ccc|c|ccccccc|cccc}
        
            q_0 & q_1^* & \cdots & q_d^* & \cdots & q_{11\cdots1}^* & \cdots &  q_{1d\cdots d}^* & \cdots &  q_{d1\cdots 1}^* & \cdots & q_{dd\cdots d}^* & 0 & \cdots & 0 & \cdots  \\
            \hline
            q_1 & q_0 & \cdots & 0 & \cdots & q_{1\cdots1}^* & \cdots & 0 & \cdots & q_{d\cdots d}^* & \cdots & 0 & q_{11\cdots1}^* & \cdots & 0 & \cdots\\
            \vdots & \vdots & \ddots & \vdots &  & \vdots & \ddots & \vdots & \ddots & \vdots & \ddots & \vdots & \vdots & \ddots & \vdots & \ddots \\
            q_d & 0 & \cdots & q_0 & \cdots & 0 & \cdots & q_{1\cdots 1}^* & \cdots & 0 & \cdots & q_{d\cdots d}^* & 0 & \cdots & q_{dd\cdots d}^* & \ddots  \\
            \hline

            
            
            
            \vdots & \vdots & \ddots & \vdots & \ddots & \vdots & \ddots & \vdots & \ddots & \ddots & \vdots & \ddots & \vdots & \ddots & \vdots & \ddots  \\

             
              \hline
              q_{11\cdots 1} & q_{1\cdots 1} & \cdots & 0 & \cdots & q_0 & \cdots & 0 & \cdots & 0 & \cdots & 0 & q_1^* & \cdots & 0 & \ddots \\
               \vdots & \vdots & \ddots & \vdots & \vdots & \vdots & \ddots & \vdots &  & \vdots &  & \vdots & \vdots & \ddots & \vdots & \ddots \\
              q_{1d\cdots d} & 0 & \cdots & q_{1\cdots 1} & \cdots & 0 & \cdots & q_0  & \cdots & 0 & \cdots & 0 & 0 &  & 0 &  \\
              \vdots &  & \ddots &  & \ddots & \vdots & & & \ddots & & & \vdots & \vdots & \ddots & \vdots & \ddots \\
             q_{d1\cdots 1} & q_{d\cdots d} & \cdots & 0 & \cdots & 0 & \cdots & 0  & \cdots & q_0 & \cdots & 0 & 0 &  & 0 &  \\
               \vdots & \vdots & \ddots & \vdots & \ddots & \vdots & & & & & \ddots & \vdots & \vdots & \ddots & \vdots & \ddots \\
              q_{dd\cdots d} & 0 & \cdots & q_{d\cdots d} & \cdots & 0 & \cdots & 0  & \cdots & 0 & \cdots & q_0 & 0 & \cdots & q_d^* \\
              \hline
               0 & q_{11\cdots 1} & \cdots & 0 & \cdots & q_1 & \cdots & 0 & \cdots & 0 & \cdots & 0 & q_0 & \cdots & 0 \\
               \vdots &  & \ddots &  & \ddots & \vdots & \ddots & & \ddots & & \ddots & \vdots & \vdots & \ddots & \vdots & \ddots  \\
              0 & 0 & \cdots & q_{dd\cdots d} & \cdots & 0 & \cdots & 0 & \cdots & 0 & \cdots &  q_d & 0 & \cdots & q_0\\
              \hline
              \vdots &  & \ddots &  & \ddots & & \ddots & & \ddots & & \ddots & & & \ddots & & \ddots \\
            \end{array}   
          \end{bmatrix}
        \]
        
\vspace{5mm}
Here we have used the lexicographic ordering for ordering the elements of the word set, $\mathcal{F}_d^+$.

Now we do some relabeling of the indexes here and define for $d=1,\cdots,n$:
\vskip.2in
\[q_k\::=\: col(q_w)_{w\in\mathcal{F}_d^+ ;|w|=k} \,\mbox{    and    }\, q_{-k}\::=\: row(q_w^*)_{w\in\mathcal{F}_d^+ ;|w|=k}\]
\vskip.2in

 and also identifying, $q_1 \otimes I_d:=\begin{pmatrix}
             q_1 & \cdots & 0\\
             \vdots & \ddots & \vdots\\
             0 & \cdots & q_1\\
             \vdots & \ddots & \vdots\\
             \vdots & \ddots & \vdots\\
             q_d & \cdots & 0\\
             \vdots & \ddots & \vdots\\
             0 & \cdots & q_d\\
 \end{pmatrix}$ and so on.

\vskip.2in
Thus we get the following compact form of $T$, which makes it easier to see multi-Toeplitz form of the matrix:
 \vskip.2in
\[T =\begin{bmatrix}
        q_0 & q_{-1} & q_{-2} & \cdots & q_{-n} & 0 & 0 & \cdots\\
        q_{1} & q_0\otimes I_d & q_{-1} \otimes I_d & \cdots & q_{-(n-1)} \otimes I_d  & q_{-n} \otimes I_d & 0 & \cdots\\
         q_{2} & q_{1} \otimes I_d & q_0\otimes I_d\otimes I_d & \cdots &  q_{-(n-2)} \otimes I_d \otimes I_d  & q_{-(n-1)} \otimes I_d \otimes I_d & q_{-n} \otimes I_d^{\otimes 2} & \cdots\\
        \vdots & \vdots & \vdots & \ddots & \vdots  & \vdots & \vdots & \ddots\\
         q_{n} & q_{n-1} \otimes I_d & q_{n-2}\otimes I_d\otimes I_d & \cdots &  q_0 \otimes I_d \otimes \cdots\otimes I_d  & q_{-1} \otimes I_d^{\otimes n} & q_{-2} \otimes I_d^{\otimes n} & \ddots\\
         0 & q_{n} \otimes I_d & q_{n-1}\otimes I_d\otimes I_d & \cdots &  q_1 \otimes I_d^{\otimes n}  & q_{0} \otimes I_d^{\otimes (n+1)} & q_{-1} \otimes I_d^{\otimes (n+1)} & \ddots\\
         0 & 0 & q_{n}\otimes I_d\otimes I_d & \cdots &  q_2 \otimes I_d^{\otimes n}  & q_{1} \otimes I_d^{\otimes (n+1)} & q_{0} \otimes I_d^{\otimes (n+2)} & \ddots\\
         \vdots & \vdots & \vdots & \ddots & \ddots  & \ddots & \ddots & \ddots\\

      \end{bmatrix}.\]
\vskip.2in
\newpage
We are going to use a schur complement technique from Dritschel and Woerdeman \cite{dritschel2005outer} in the proof of the main theorem. So let us define the following:
\begin{definition}
If $\mathcal{H}_1$ and $\mathcal{H}_2$ are Hilbert spaces and 
 \vskip.2in
\[
M=\begin{pmatrix}
            A & B^*\\
            B & C
\end{pmatrix}: \mathcal{H}_1 \oplus \mathcal{H}_2 \rightarrow \mathcal{H}_1 \oplus \mathcal{H}_2
\]
is a positive semidefinite operator then there exists a unique contraction $G:\overline{ran}(C)\rightarrow \overline{ran}(A)$ such that $B=A^{1/2}GC^{1/2}$. The \textbf{Schur complement of $M$ supported on $\mathcal{H}_1$} is defined to be positive semidefinite operator $A^{1/2}(1-GG^*)A^{1/2}$.\newline
An alternative way to define the schur complement of $M$ supported on $\mathcal{H}_1$ is via 
\vskip.2in

\[
\langle Sf,f \rangle = inf \Big\{\Big\langle \begin{pmatrix}
            A & B\\
            B^* & C
\end{pmatrix}\begin{pmatrix}
            f\\
            g
\end{pmatrix},\begin{pmatrix}
            f\\
            g
\end{pmatrix}\Big\rangle:\:g\in \mathcal{H}_2\Big\}
\]
that is, $S:\mathcal{H}_1\longrightarrow \mathcal{H}_1$ is the largest positive semidefinite operator which may be subtracted from $A$ in $M$ such that the resulting operator matrix remains positive semidefinite.\\
\end{definition}
                
\begin{remark}
Consider any positive semidefinite operator matrix, $M$, say\\
\[
M=\begin{pmatrix}
            A & B^*\\
            B & C
\end{pmatrix}: \mathcal{H}_1 \oplus \mathcal{H}_2 \rightarrow \mathcal{H}_1 \oplus \mathcal{H}_2
\]\\
and let $S_M$ be the Schur complement of $M$ supported on $\mathcal{H}_1$. Then for a positive semidefinite matrix, $M\otimes I_d$, the Schur complement supported on $\mathcal{H}_1\otimes \mathbb{C}^d$ is $S_{M\otimes I_d} = S_M \otimes I_d$. \\
\end{remark}
If $Q$ is an operator from $\mathcal{H}$ to $\mathcal{H}$ for some hilbert space $\mathcal{H}$, then $Q\otimes I_d$ takes values from $\mathcal{H}\otimes \mathbb{C}^d$ and outputs in $\mathcal{H}\otimes\mathbb{C}^d$.\vspace{2mm}

Let us denote $\mathcal{H}_i=\mathcal{H}\otimes(\mathbb{C}^d)^{\otimes i}$ for $i\geq 0$.\vspace{2mm}

We make use of the following notation for the next section from \cite{dritschel2005outer}: Typically we will index rows and columns of an $n\times n$ matrix with $0,\:\dots,n-1$. For $\Lambda\subseteq \{0,\cdots,n-1\}$ and an $n\times n$ matrix $M$, we write $S(M;\Lambda)$, or $S(\Lambda)$ when there is no chance of confusion, for the Schur complement supported on the rows and columns labeled by elements of $\Lambda$. It is usual to view $S(\Lambda)$ as an $m\times m$ matrix, where $m=card \Lambda$, however it is often useful to take $S(\Lambda)$ as an $n\times n$ matrix by padding rest of the entries in this $n\times n$ matrix with zeros. For notational convenience we have used $S(m)$ for Schur complement supported on rows and columns labeled by $\{0,\:\cdots,m\}$. 

\end{section}

\begin{section}{MAIN THEOREM}
\vspace{2mm}

The following Theorem $3.1$ and Corollary $3.2$ are multi-Toeplitz versions of Proposition $3.1$ and corollary $3.2$ from \cite{dritschel2005outer}.
\begin{theorem}\label{thm 1}

Consider the positive semidefinite multi-Toeplitz operator matrix\\

\[
T_Q = \begin{bmatrix}
        Q_0 & Q_{-1} & Q_{-2} & \cdots & \cdots \\
        Q_{1} & Q_0\otimes I_d & Q_{-1} \otimes I_d & \cdots & \cdots \\
        Q_{2} & Q_{1} \otimes I_d & Q_0\otimes I_d\otimes I_d & \cdots & \cdots \\
        \vdots & \vdots & \vdots & \ddots &   &  \\
      \end{bmatrix}
\]\\

acting on $\mathcal{H}_0 \oplus \mathcal{H}_1 \oplus \mathcal{H}_2 \oplus \cdots$.
Then Schur complement of $T_Q$ satisfies the reccurrence relation:\\
\[
    S(m)=\begin{bmatrix}
            A & B^* \\
            B & S(m-1)\otimes I_d
        \end{bmatrix}
\]\\
for appropriate choice of $A: \mathcal{H}_0 \rightarrow \mathcal{H}_0$ and $B^*: \mathcal{H}_1 \oplus \mathcal{H}_2 \oplus \cdots \rightarrow \mathcal{H}$. 
When $Q_j=0$ for $j\geq m+1$, then $A=Q_0$ and $B=col(Q_i)_{i=1}^m$.
\end{theorem}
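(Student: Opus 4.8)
The plan is to analyze the Schur complement $S(m)$ of $T_Q$ — that is, the Schur complement supported on the rows and columns labeled $\{0,1,\dots,m\}$ — by exploiting the self-similar block structure of the multi-Toeplitz matrix $T_Q$. The key observation is that $T_Q$, once we delete its $0$-th block row and column, is not literally $T_Q$ again, but it is the matrix $T_Q'$ whose $(i,j)$ block is the $(i{-}1,j{-}1)$ block of $T_Q$; and by the explicit form displayed in the preliminaries, that matrix is exactly $\widetilde{T_Q}\otimes I_d$, where $\widetilde{T_Q}$ is the multi-Toeplitz operator with the same coefficient data acting on $\mathcal H_0\oplus\mathcal H_1\oplus\cdots$. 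In other words, the lower-right corner of $T_Q$ obtained by removing row/column $0$ is a copy of $T_Q$ tensored with $I_d$. This is the structural fact that drives the whole recurrence.

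First I would write $T_Q$ in $2\times 2$ block form along the split $\mathcal H_0 \oplus (\mathcal H_1\oplus\mathcal H_2\oplus\cdots)$, say
\[
T_Q=\begin{bmatrix} Q_0 & R^* \\ R & T_Q\otimes I_d\end{bmatrix},
\]
where $R=\operatorname{col}(Q_i)_{i\ge 1}$ and I have used the structural identification above for the $(2,2)$ corner. Next I would recall from the notation set up after the Remark that $S(m)=S(T_Q;\{0,\dots,m\})$ may be viewed as an operator on $\mathcal H_0\oplus\cdots\oplus\mathcal H_m$, and I would use the standard "Schur complement in stages" / quotient property of Schur complements (the iterated-Schur-complement identity $S(\Lambda)=S(S(\Lambda');\Lambda)$ for $\Lambda\subseteq\Lambda'$, which is implicit in the variational characterization given in the Definition): computing the Schur complement of $T_Q$ on $\{0,\dots,m\}$ is the same as first taking the Schur complement on $\{0,\dots,m\}$ within the finite block section indexed by $\{0,\dots,M\}$ for $M$ large, then letting $M\to\infty$; and more usefully, one can peel off the infinitely many rows $\ge m+1$ first. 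The point is that the Schur complement of $T_Q$ onto $\{0,1,\dots,m\}$ equals the Schur complement onto $\{0\}\cup\{1,\dots,m\}$ of the matrix
\[
\begin{bmatrix} Q_0 & R_m^* \\ R_m & S'(m-1) \end{bmatrix},
\]
where $R_m$ is the first $m$ block-columns of $R$ and $S'(m-1)$ is the Schur complement of the $(2,2)$ corner $T_Q\otimes I_d$ onto its rows/columns $\{1,\dots,m\}$, i.e. its rows $\{0,\dots,m-1\}$ after reindexing.

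Now I would invoke the Remark on tensoring: since the $(2,2)$ corner is $T_Q\otimes I_d$, its Schur complement onto the block set corresponding to $\{0,\dots,m-1\}\otimes\mathbb C^d$ is $S(m-1)\otimes I_d$. Substituting, we get exactly
\[
S(m)=\begin{bmatrix} A & B^* \\ B & S(m-1)\otimes I_d\end{bmatrix}
\]
with $B=R_m=\operatorname{col}(Q_i)_{i=1}^{m}$ and $A$ the resulting $(1,1)$ block, which is $Q_0$ minus a correction term coming from the remaining rows; when $Q_j=0$ for all $j\ge m+1$ the matrix $R$ has no columns beyond the $m$-th, there is no correction, and $A=Q_0$, $B=\operatorname{col}(Q_i)_{i=1}^m$, as claimed. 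The main obstacle I anticipate is the bookkeeping needed to make the "peel off the tail rows first" step rigorous for an operator acting on an infinite direct sum: I need to justify that the Schur complement onto a finite index set of the infinite matrix $T_Q$ coincides with the limit of Schur complements of finite sections, and that Schur complements can be taken in the stated order. I would handle this via the variational (infimum) definition of the Schur complement — monotone limits of the quadratic forms $\langle (M-S)f,f\rangle$ over the directed family of finite sections — together with the iterated-Schur-complement identity, both of which are standard and are exactly the tools carried over from Dritschel–Woerdeman; the multi-Toeplitz structure enters only through the single algebraic identity that the deleted corner is $T_Q\otimes I_d$.
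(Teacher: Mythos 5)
Your structural starting point is the same as the paper's: deleting the $0$-th block row and column of $T_Q$ leaves $T_Q\otimes I_d$, and Schur complementation commutes with $\otimes\, I_d$. However, your central ``peeling'' identity is not correct as stated. The matrix $\begin{bmatrix} Q_0 & R_m^* \\ R_m & S'(m-1)\end{bmatrix}$ is already indexed by $\{0,\dots,m\}$, so its Schur complement supported on $\{0\}\cup\{1,\dots,m\}$ is the matrix itself, and your identity would assert $S(m)=\begin{bmatrix} Q_0 & R_m^* \\ R_m & S(m-1)\otimes I_d\end{bmatrix}$, i.e.\ that the $0$-th row of $S(m)$ carries no correction. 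That is false whenever some $Q_j\neq 0$ with $j>m$: already for a $3\times 3$ block matrix with blocks indexed $0,1,2$ and invertible $(2,2)$ corner $f$,
\[
S(\{0,1\})=\begin{bmatrix} a-c^*f^{-1}c & \; b^*-c^*f^{-1}e \\ b-e^*f^{-1}c & \; d-e^*f^{-1}e\end{bmatrix},
\]
so \emph{both} the $(0,0)$ entry and the off-diagonal acquire corrections; you hedge on $A$ but keep $B=R_m$ exactly, which is wrong in general (in the paper's proof $B=\mathrm{col}(Q_i)_{i=1}^m-X$ for a completion operator $X$). Moreover, the tool you invoke, the iterated identity $S(\Lambda)=S(S(\Lambda');\Lambda)$, computes a Schur complement \emph{of} a Schur complement on nested index sets; what the recurrence concerns is the \emph{compression} (principal corner) of $S(m)$ to $\{1,\dots,m\}$, a different operation, so that identity does not deliver your step.

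The fact you actually need is: the compression of $S(T_Q;\{0,\dots,m\})$ to rows and columns $\{1,\dots,m\}$ equals $S(T_Q\otimes I_d;\{1,\dots,m\})$, hence equals $S(m-1)\otimes I_d$ by the tensoring remark. This is true and follows in one line from the variational (infimum) characterization: for $f$ supported in positions $1,\dots,m$ one has $\langle Cf,f\rangle=\inf_g\langle T_Q(f+g),f+g\rangle$ with $g$ supported in positions $>m$, and since $f+g$ lives in positions $\geq 1$ only the compression $T_Q\otimes I_d$ enters; no finite-section limit is needed, because the infimum definition already applies to operators on the infinite direct sum, so that worry in your last paragraph is moot. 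With your misstated identity replaced by this compression property, you would obtain $C=S(m-1)\otimes I_d$ by a route genuinely different from, and shorter than, the paper's, which instead proves two inequalities: $C\leq S(m-1)\otimes I_d$ by deleting row $0$ and using maximality, and $S(m-1)\otimes I_d\leq C$ by exhibiting a competitor below $T_Q$ via an operator-matrix completion with an auxiliary block $X$. Finally, your treatment of the case $Q_j=0$ for $j\geq m+1$ (``$R$ has no columns beyond the $m$-th, so there is no correction'') is an invertibility heuristic; in the general positive-semidefinite setting you still need an argument, as in the paper where $\tilde A=Q_0$ forces $X=0$ (or, in a repaired variational proof, factoring the coupling block through $(T_Q\otimes I_d^{\otimes(m+1)})^{1/2}$), to conclude $A=Q_0$ and $B=\mathrm{col}(Q_i)_{i=1}^m$.
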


\begin{remark}
Given \[
T_Q = \begin{bmatrix}
        Q_0 & Q_{-1} & Q_{-2} & \cdots & \cdots \\
        Q_{1} & Q_0\otimes I_d & Q_{-1} \otimes I_d & \cdots & \cdots \\
        Q_{2} & Q_{1} \otimes I_d & Q_0\otimes I_d\otimes I_d & \cdots & \cdots \\
        \vdots & \vdots & \vdots & \ddots &   &  \\
      \end{bmatrix},
\]\\

we observe that $T_Q$ can be identified with
\vskip.2in
\[
T_Q = \begin{bmatrix}
       Q_0 & row(Q_{-j})_{j\geq 1} \\
       col(Q_j)_{j\geq 1} & T_Q \: \otimes \: I_d

       \end{bmatrix}.
 \]
 \vskip.2in

\end{remark}

\begin{proof}
Let us write
\[
     S(m) = \begin{bmatrix}
             A & B^* \\
             B & C
            \end{bmatrix}: \mathcal{H}_0 \oplus \mathcal{H}_1 \oplus \mathcal{H}_2 \oplus \cdots \oplus \mathcal{H}_m \rightarrow \mathcal{H}_0 \oplus \mathcal{H}_1 \oplus \mathcal{H}_2 \oplus \cdots\, \oplus\, \mathcal{H}_m.
\]

By definition of Schur complement we have that 
\[
    T_Q - \begin{bmatrix}
            S(m) & 0\\
            0 & 0 
          \end{bmatrix} \geq 0.
\]

That is, we have that
\begin{align}\label{first step}
\begin{bmatrix}
 \begin{array}{c|c}
       Q_0 & row(Q_{-j})_{j\geq 1} \\\hline
       col(Q_j)_{j\geq 1} & T_Q \: \otimes \: I_d
        \end{array}
       \end{bmatrix} - \begin{bmatrix}\begin{array}{c|cc}
            A & B^* & 0\\\hline
            B & C & 0 \\
            0 & 0 & 0
           \end{array} 
          \end{bmatrix} \geq 0 .
 \end{align}
 \vskip.2in

Then leaving out $0$th row and column in (\ref{first step}) we get,\\

\[
T_Q \otimes I_d - \begin{bmatrix}
            C & 0\\
            0 & 0 
          \end{bmatrix} \geq 0.
\]\\

So we have from theorem \ref{thm 1}, $C\leq S(m-1)\otimes I_d$.

Now leaving out the rows and columns $1, \dots, m$ in $(\ref{first step})$, we get\\
\[
\begin{bmatrix}
            Q_0-A & row(Q_j^*)_{j\geq m+1}\\
            \:col(Q_j)_{j\geq m+1} & T_Q \otimes I_d^{m+1} 
          \end{bmatrix} \geq 0.
\]\\

That is, \\

\[
A\leq S\left(\begin{bmatrix}
            Q_0 & row(Q_j^*)_{j\geq m+1}\\
            \:col(Q_j)_{j\geq m+1} & T_Q \otimes I_d^{m+1} 
          \end{bmatrix};0\right) := \Tilde{A}.
\]
\vskip.2in
Note that when $Q_j=0$, $j \geq m+1$ then $\Tilde{A} = Q_0$.\\

Again considering the following operator matrix
\vskip.2in
\begin{align}\label{X}
    \begin{bmatrix}
            Q_0-\Tilde{A} & X & row(Q_j^*)_{j\geq m+1}\\
            X^* &  (Q_{i-j} \otimes I_d)_{i=1,j=1}^m-S(m-1)\otimes I_d & (Q_{i-j}\otimes I_d)_{i=1,j=m+1}^{m+1,\infty}  \\
            col(Q_j)_{j\geq m+1} & (Q_{i-j} \otimes I_d)_{i=m+1,j=1}^{\infty,m} & T_Q\otimes I_d^{m+1}
          \end{bmatrix}
\end{align}\\

The existence of an operator $X$ making this into a positive semidefinite matrix is a variant of a standard operator matrix completion problem, (see Theorem $XVI.3.1$ in \cite{MR1120546})
\: so there always exists such an X. \\
Note that when $\Tilde{A}=Q_0$ we have necessarily that $X=0$. We fix such an $X$. Now $(\ref{X})$ is positive semidefinite, we obtain that \\
\[
\begin{bmatrix}
            \Tilde{A} & row(Q_j^*)_{j=1}^m - X\\
            col(Q_j)_{j=1}^m - X & S(m-1)\otimes I_d
          \end{bmatrix}\leq S(m) = \begin{bmatrix}
                      A & B^*\\
                      B & C
          \end{bmatrix}.
\]
\vskip.2in
This implies that $\Tilde{A}\leq A$ and $S(m-1)\otimes I_d \leq C$. From above we also have $A\leq \Tilde{A}$ and $C\leq S(m-1)\otimes I_d$, thus the equalities  $A=\Tilde{A}$ and $C=S(m-1)\otimes I_d$ follow. \\
Moreover, if $Q_j=0$ for $j\geq m+1$, we have that $\Tilde{A}=Q_0$ and $X=0$, and thus $B=col(Q_i)_{i=1}^m$.
\end{proof}

\begin{cor} \label{cor}
Consider the positive semidefinite multi-Toeplitz operator matrix
\[
T_Q = \begin{bmatrix}
        Q_0 & Q_{-1} & Q_{-2} & \cdots & \cdots \\
        Q_{1} & Q_0\otimes I_d & Q_{-1} \otimes I_d & \cdots & \cdots \\
        Q_{2} & Q_{1} \otimes I_d & Q_0\otimes I_d\otimes I_d & \cdots & \cdots \\
        \vdots & \vdots & \vdots & \ddots &   &  \\
      \end{bmatrix}
\]
\vskip.2in
acting on $\mathcal{H}_0 \oplus \mathcal{H}_1 \oplus \mathcal{H}_2 \oplus \cdots$. Then for each $m \geq 0$, there exist operators $F_0$, $F_1$, $\cdots$ with
\[
  F_{m-k}\otimes I_d^{\otimes k}: \mathcal{H}_k \longrightarrow \overline{ran}F_0 \otimes \mathbb{C}^m \subseteq \mathcal{H}\otimes\mathbb{C}^m
\]

for $0\leq k \leq (m-1)$ so that the Schur complements $S(m)$ of $T_Q$ satisfy

\[
S(m)=\begin{bmatrix}
            F_0^* & F_1^* & F_2^* & \cdots & F_m^*\\
                  & F_0^*\otimes I_d & F_1^*\otimes I_d & \cdots & F_{m-1}^*\otimes I_d\\
                  &  &  F_0^*\otimes I_d^{\otimes 2} & \cdots & F_{m-2}^*\otimes I_d^{\otimes 2}\\
                  &  &  &  \ddots & \vdots\\
                  &  &  &  &  F_0^*\otimes I_d^{\otimes m}
\end{bmatrix}\begin{bmatrix}
            F_0 &  &  &  &\\
            F_1 & F_0\otimes I_d &  &  &\\
            F_2 & F_1\otimes I_d & F_0\otimes I_d^{\otimes 2} &  &\\
            \vdots & \vdots & \vdots & \ddots \\
            F_m & F_{m-1}\otimes I_d & F_{m-2}\otimes I_d^{\otimes 2} & \cdots & F_0\otimes I_d^{\otimes m}
\end{bmatrix}.
\]
\end{cor}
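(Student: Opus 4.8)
The plan is to define the operators $F_\ell$ in advance out of the Schur complements $S(\ell)$ and then prove the factorization $S(m)=\mathcal{F}_m^*\mathcal{F}_m$ by induction on $m$, the recurrence of Theorem \ref{thm 1} being what forces the multi-Toeplitz shape. First record, by iterating that recurrence $j$ times, the identity $(S(m))_{m,j}=(S(m-j))_{m-j,0}\otimes I_d^{\otimes j}$ for $0\le j\le m$; in particular $(S(m))_{m,m}=S(0)\otimes I_d^{\otimes m}$. Now set $F_0:=S(0)^{1/2}$, and for $\ell\ge 1$ let $F_\ell$ be the minimal solution of the operator equation $(F_0\otimes I_d^{\otimes\ell})^*F_\ell=(S(\ell))_{\ell,0}$. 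This is legitimate: since $S(\ell)\ge 0$ one has $ran\big((S(\ell))_{\ell,0}\big)\subseteq ran\big(((S(\ell))_{\ell,\ell})^{1/2}\big)=ran\big(F_0\otimes I_d^{\otimes\ell}\big)$, so Douglas' lemma applies, and minimality forces $\overline{ran}(F_\ell)\subseteq\overline{ran}(F_0\otimes I_d^{\otimes\ell})$, which is exactly the range condition demanded in the statement. Let $\mathcal{F}_m$ denote the lower triangular block operator matrix whose $(i,j)$ entry is $F_{i-j}\otimes I_d^{\otimes j}$ for $0\le j\le i\le m$ (so its top-left $m\times m$ block is $\mathcal{F}_{m-1}$, and $\mathcal{F}_{m-1}\otimes I_d$ is its bottom-right $m\times m$ block).

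The induction on $m$ then runs as follows. For $m=0$, $\mathcal{F}_0^*\mathcal{F}_0=F_0^*F_0=S(0)$. Assume $S(m-1)=\mathcal{F}_{m-1}^*\mathcal{F}_{m-1}$, so $S(m-1)\otimes I_d=(\mathcal{F}_{m-1}\otimes I_d)^*(\mathcal{F}_{m-1}\otimes I_d)$. Writing
\[
S(m)=\begin{bmatrix} A & B^{*}\\ B & S(m-1)\otimes I_d\end{bmatrix},
\qquad
\mathcal{F}_m=\begin{bmatrix}\mathcal{F}_{m-1} & 0\\ r_m & F_0\otimes I_d^{\otimes m}\end{bmatrix},
\]
with $r_m=[\,F_m,\ F_{m-1}\otimes I_d,\ \dots,\ F_1\otimes I_d^{\otimes(m-1)}\,]$, it remains only to match the blocks of $\mathcal{F}_m^*\mathcal{F}_m$ with those of $S(m)$. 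The corner $(m,m)$ block matches because $(F_0^*F_0)\otimes I_d^{\otimes m}=S(0)\otimes I_d^{\otimes m}=(S(m))_{m,m}$. The $(m,j)$ block for $0\le j<m$ is $(F_0\otimes I_d^{\otimes m})^*(r_m)_j$; this equals $(S(m))_{m,j}$ when $j=0$ by the very definition of $F_m$, and when $j\ge 1$ because $(F_0\otimes I_d^{\otimes m})^*(F_{m-j}\otimes I_d^{\otimes j})=\big((F_0\otimes I_d^{\otimes(m-j)})^*F_{m-j}\big)\otimes I_d^{\otimes j}=(S(m-j))_{m-j,0}\otimes I_d^{\otimes j}=(S(m))_{m,j}$, using the definition of $F_{m-j}$ and the iterated recurrence. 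Finally, the top-left $m\times m$ block of $\mathcal{F}_m^*\mathcal{F}_m$ is $\mathcal{F}_{m-1}^*\mathcal{F}_{m-1}+r_m^*r_m=S(m-1)+r_m^*r_m$; a short Moore--Penrose computation — valid because each block of $r_m$ solves its defining Douglas equation through $F_0\otimes I_d^{\otimes m}$ — gives $r_m^*r_m=(S(m))_{\{0,\dots,m-1\},m}\,((S(m))_{m,m})^{\dagger}\,(S(m))_{m,\{0,\dots,m-1\}}$, and since $S(m-1)=S(S(m);\{0,\dots,m-1\})$ is itself a Schur complement of $S(m)$ (a Schur complement of a Schur complement over nested index sets), this sum is precisely the top-left block of $S(m)$. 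Hence $S(m)=\mathcal{F}_m^*\mathcal{F}_m$ with $\mathcal{F}_m$ of the stated form, and the induction closes.

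The step I expect to be the main obstacle is exactly this last verification — not the mere existence of a factorization, which any block Cholesky argument provides. A generic Cholesky factorization of the positive operator $S(m)$ has no reason to be multi-Toeplitz; the special form is obtained only by using that $S(m)$ is built recursively from $S(m-1)\otimes I_d$ (Theorem \ref{thm 1}) and hence, after iterating, that $(S(m))_{m,j}$ is controlled by the lower-order Schur complement $S(m-j)$. Recognizing these reductions, defining the $F_\ell$ once and for all from the $S(\ell)$, and checking that the minimal Douglas solutions propagate the range inclusions $\overline{ran}(F_{m-k}\otimes I_d^{\otimes k})\subseteq\overline{ran}(F_0\otimes I_d^{\otimes m})$ through the tensor powers are the points requiring care; the remaining block arithmetic is routine. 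As a sanity check, in the banded situation $Q_j=0$ for $j>n$ one finds $(S(m))_{m,0}=0$ and hence $F_\ell=0$ for $\ell>n$, so $\mathcal{F}_m$ has only finitely many nonzero diagonals, recovering the finite-degree factor $T_F$ announced in the introduction.
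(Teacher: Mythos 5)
Your proof is correct, and it reaches the factorization by a genuinely different mechanism than the paper, although both share the same skeleton: induction on $m$, the recurrence of Theorem \ref{thm 1}, and the nested Schur complement identity $S(m-1)=S(S(m);\{0,\dots,m-1\})$ (Corollary 2.3 of \cite{dritschel2005outer}), which you invoke as the ``Schur complement of a Schur complement'' fact. The paper manufactures the new row \emph{inside} the induction: it applies Lemma 2.1 of \cite{dritschel2005outer} to the triangular factor of $S(m-1)$ and $R=F_0\otimes I_d^{\otimes m}$ to obtain some completion $(G_m,\dots,G_1)$ with range in $\overline{ran}\,F_0\otimes(\mathbb{C}^d)^{\otimes m}$, and must then identify $G_j=F_j\otimes I_d^{\otimes(m-j)}$ by comparing two factorizations of $S(m-1)\otimes I_d$ and cancelling $F_0^*\otimes I_d^{\otimes m}$ using the range conditions, finally \emph{defining} $F_m:=G_m$. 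You instead pin down every $F_\ell$ in advance as the minimal Douglas solution of $(F_0\otimes I_d^{\otimes\ell})F_\ell=(S(\ell))_{\ell,0}$ (legitimate, as you check, since positivity of the $\{0,\ell\}$ compression gives the needed range inclusion), prove the corner identity $(S(m))_{m,j}=(S(m-j))_{m-j,0}\otimes I_d^{\otimes j}$ by iterating the recurrence, and verify the product blockwise; the paper's identification step is absorbed into uniqueness of minimal Douglas solutions, which is arguably cleaner and more self-contained (it does not need Lemma 2.1 of \cite{dritschel2005outer} at all). One small point of rigor in your last block comparison: since $S(0)$ need not have closed range, $((S(m))_{m,m})^{\dagger}$ is only densely defined, so the Moore--Penrose formula for $r_m^*r_m$ is better avoided; but no pseudo-inverse is needed. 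By the contraction definition of the Schur complement, $S(S(m);\{0,\dots,m-1\})=P-C^*C$, where $P$ is the leading $m\times m$ block of $S(m)$ and $C$ is the unique solution of $(F_0\otimes I_d^{\otimes m})C=(S(m))_{m,\{0,\dots,m-1\}}$ with range in $\overline{ran}(F_0\otimes I_d^{\otimes m})$; your $r_m$ is exactly such a solution (the range inclusions do propagate through the tensor powers, as you note), so $r_m=C$ and $r_m^*r_m=P-S(m-1)$, which is precisely what closes your induction.
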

\vskip.2in

\begin{proof}
We will prove this by induction on \textbf{$m$}.\\
\textbf{Base step:} $S(0)$ being a positive semidefinite operator, we can write $S(0)=F_0^*F_0$ where $F_0=(S(0))^{1/2}$.\newline
\textbf{Induction hypothesis:} Let us assume that the result holds for $S(m-1)$.\newline
By \cite{dritschel2005outer}, Proposition 3.1, 
\: we have that $(S(m))_{m,m}=(S(m-1))_{m-1,m-1}\otimes I_d=F_0^*F_0\otimes I_d^{\otimes (m-1)}\otimes I_d=F_0^*F_0\otimes I_d^{\otimes m}$.\newline

From \cite{dritschel2005outer} Corollary 2.3, we have that $S(m-1)=S(S(m);m-1)$. 
 Thus applying Lemma 2.1 from \cite{dritschel2005outer} to
\begin{align}
  P=  \begin{bmatrix}
            F_0 &  &  &  &\\
            F_1 & F_0\otimes I_d &  &  &\\
            F_2 & F_1\otimes I_d & F_0\otimes I_d^{\otimes 2} &  &\\
            \vdots & \vdots & \vdots & \ddots \\
            F_{m-1} & F_{m-2}\otimes I_d & F_{m-3}\otimes I_d^{\otimes 2} & \cdots & F_0\otimes I_d^{\otimes (m-1)}
\end{bmatrix},\, R=F_0\otimes I_d^{\otimes m}
\end{align}\\
there exist $(G_m\: \cdots\: G_1)$ 
so that
\vskip.2in
\[
S(m)=\begin{bmatrix}
            F_0^* & F_1^* & F_2^* & \cdots & G_m^*\\
                  & F_0^*\otimes I_d & F_1^*\otimes I_d & \cdots & G_{m-1}^*\\
                  &  &  F_0^*\otimes I_d^{\otimes 2} & \cdots & G_{m-2}^*\\
                  &  &  &  \ddots & \vdots\\
                  &  &  &  &  F_0^*\otimes I_d^{\otimes m}
\end{bmatrix}\begin{bmatrix}
            F_0 &  &  &  &\\
            F_1 & F_0\otimes I_d &  &  &\\
            F_2 & F_1\otimes I_d & F_0\otimes I_d^{\otimes 2} &  &\\
            \vdots & \vdots & \vdots & \ddots \\
            G_m & G_{m-1} & G_{m-2} & \cdots & F_0\otimes I_d^{\otimes m} 
\end{bmatrix},
\]
and $ran(G_m\: \cdots\: G_1)\subseteq \overline{ran} F_0\otimes (\mathbf{C}^d)^{\otimes m}$. 
Comparing with $S(m) = \begin{bmatrix}
                      A & B^*\\
                      B & S(m-1)\otimes I_d
          \end{bmatrix}$ along with the induction hypothesis yields, $S(m-1)\otimes I_d$ factors into\\

\begin{align*}
  \begin{bmatrix}
                   F_0^*\otimes I_d & F_1^*\otimes I_d & \cdots & F_{m-2}^*\otimes I_d & G_{m-1}^*\\
                    &  F_0^*\otimes I_d^{\otimes 2} & \cdots & F_{m-3}^*\otimes I_d^{\otimes 2} & G_{m-2}^*\\
                    &  &  \ddots & \vdots & \vdots\\
                    &  &  & F_0^*\otimes I_d^{\otimes (m-1)} & G_{1}^*\\
                    &  &  &  & F_0^*\otimes I_d^m
\end{bmatrix}\begin{bmatrix}
            F_0\otimes I_d &  &  &  &\\
            F_1\otimes I_d & F_0\otimes I_d^{\otimes 2} &  &  &\\
            F_2\otimes I_d & F_1\otimes I_d^{\otimes 2} & F_0\otimes I_d^{\otimes 3} &  &\\
            \vdots & \vdots & \vdots & \ddots \\
            G_{m-1} & G_{m-2} & G_{m-3} & \cdots & F_0\otimes I_d^{\otimes m} 
\end{bmatrix}\\
\end{align*}

\vskip.2in

\begin{align*}
&= \begin{bmatrix}
                   F_0^*\otimes I_d & F_1^*\otimes I_d & \cdots & F_{m-2}^*\otimes I_d & F_{m-1}^*\otimes I_d\\
                    &  F_0^*\otimes I_d^{\otimes 2} & \cdots & F_{m-3}^*\otimes I_d^{\otimes 2} & F_{m-2}^*\otimes I_d^{\otimes 2}\\
                    &  &  \ddots & \vdots & \vdots\\
                    &  &  & F_0^*\otimes I_d^{\otimes (m-1)} & F_{1}^*\otimes I_d^{\otimes(m-1)}\\
                    &  &  & & F_0^*\otimes I_d^{\otimes m}
\end{bmatrix}\begin{bmatrix}
            F_0\otimes I_d &  &  &  &\\
            F_1\otimes I_d & F_0\otimes I_d^{\otimes 2} &  &  &\\
            F_2\otimes I_d & F_1\otimes I_d^{\otimes 2} & F_0\otimes I_d^{\otimes 3} &  &\\
            \vdots & \vdots & \vdots & \ddots \\
            F_{m-1}\otimes I_d & F_{m-2}\otimes I_d^{\otimes 2} & F_{m-3}\otimes I_d^{\otimes 3} & \cdots & F_0\otimes I_d^{\otimes m}
\end{bmatrix}\\               
\end{align*}
and thus we have 
\vskip.2in
\[
F_0^*\otimes I_d^{\otimes m}\:\hspace{3mm}(\hspace{3mm} G_{m-1} \hspace{3mm} G_{m-2} \hspace{3mm} \cdots \hspace{3mm} G_1\hspace{3mm})\hspace{3mm} = F_0^*\otimes I_d^{\otimes m} \hspace{3mm} (\hspace{3mm}F_{m-1}\otimes I_d \hspace{3mm} F_{m-2}\otimes I_d^{\otimes 2} 
\hspace{3mm} \cdots \hspace{3mm} F_1\otimes I_d^{\otimes(m-1)}\hspace{3mm}).
\]\vskip.2in As \[{ran}(\hspace{3mm}G_{m-1} \hspace{3mm} G_{m-2}\hspace{3mm} \cdots \hspace{3mm} G_1\hspace{3mm})\hspace{3mm}\subseteq\hspace{3mm} \overline{ran}F_0\otimes (\mathbb{C}^d)^{\otimes m}\] and\\
\vskip.2in
\[ran(\hspace{3mm}F_{m-1}\otimes I_d \hspace{3mm} F_{m-2}\otimes I_d^{\otimes 2} \hspace{3mm} F_{m-3}\otimes I_d^{\otimes 3}\hspace{3mm} \cdots \hspace{3mm} F_1\otimes I_d^{\otimes (m-1)}\hspace{3mm})\hspace{3mm}\subseteq \hspace{3mm}\overline{ran}F_0\otimes (\mathbb{C}^d)^{\otimes m}\] 
\vskip.2in
it follows that $G_j=F_j\otimes I_d^{\otimes (m-j)}$ for $j=1,\cdots,m-1$.
By setting $F_m:= G_m$, we have our result.
\end{proof}
\vskip.2in

\begin{theorem}
If $(L_1,\,L_2,\,\cdots,\,L_d)$ is the left $d$-shift,  
$\mathcal{H}$ a Hilbert space and  $\{Q_w\}_{|w|\leq n}$ are operators $Q_w:\:\mathcal{H}\rightarrow \mathcal{H}$ such that the operator $T_Q: \mathcal{H}\otimes \ell^2(\mathcal{F}^+_d) \rightarrow \mathcal{H}\otimes \ell^2(\mathcal{F}^+_d)$ given by\\
\vskip.2in
\[
T_Q:=\:Q_0\otimes I_{\ell^2(\mathcal{F}^+_d)}\, +\,\sum_{0<|v|\leq n} Q_v \otimes L_v  \,+\, \sum_{0<|v|\leq n} Q_v^*\otimes L_v^*\, 
\]
is positive, then there exist operators $F_0,\cdots,F_w:\mathcal{H}\rightarrow\mathcal{H}$, $|w|\leq n$, such that for $T_F:=\,\sum_{0\leq|v|\leq n} F_v\otimes L_v$ we have $T_Q=T_F^*T_F$.
\end{theorem}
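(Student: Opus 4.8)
The plan is to read off the coefficients $F_w$ of the analytic factor from the Schur complements of $T_Q$, and then to verify $T_Q=T_F^*T_F$ by comparing first block-rows. Since $T_Q\geq 0$, the Schur complements $S(m):=S(T_Q;\{0,\dots,m\})$ are well-defined positive semidefinite operators on $\mathcal{H}_0\oplus\cdots\oplus\mathcal{H}_m$. Applying Corollary \ref{cor} to this $T_Q$ produces, for each $m$, operators $F_0,F_1,\dots$ --- here $F_j$ stands for $\mathrm{col}(F_w)_{|w|=j}$ with each $F_w:\mathcal{H}\to\mathcal{H}$ --- so that $S(m)=\Phi_m^*\Phi_m$, where $\Phi_m$ is the block lower-triangular array whose $(j,k)$ block is $F_{j-k}\otimes I_d^{\otimes k}$ for $0\leq k\leq j\leq m$. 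The induction in the proof of Corollary \ref{cor} shows that the family obtained at stage $m$ extends the one obtained at stage $m-1$, so we get a single sequence $\{F_w\}$; note that $\Phi_m$ is then the compression of $T_F:=\sum_{0\leq|v|\leq n}F_v\otimes L_v$ to $\mathcal{H}_0\oplus\cdots\oplus\mathcal{H}_m$, once we know this sum is finite.

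To pin the $F_w$ down I would use Theorem \ref{thm 1}. Since $Q_w=0$ whenever $|w|>n$, for every $m\geq n$ the hypothesis of Theorem \ref{thm 1} that $Q_j=0$ for $j\geq m+1$ holds, so the $(0,0)$ block of $S(m)$ equals $Q_0$ and the first block-row of $S(m)$ is $(Q_0,\,Q_{-1},\dots,Q_{-n},\,0,\dots,0)$, where $Q_{-i}:=\mathrm{row}(Q_w^*)_{|w|=i}$. Comparing the $(0,0)$ blocks in $S(m)=\Phi_m^*\Phi_m$ gives $\sum_{l=0}^{m}F_l^*F_l=Q_0$ for all $m\geq n$, which forces $F_w=0$ for $|w|>n$. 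Hence $T_F=\sum_{0\leq|v|\leq n}F_v\otimes L_v$ is a finite (so bounded) sum, $\Phi_m$ is literally $P_mT_FP_m$, and, expanding $T_F^*T_F=\sum_{v,w}F_v^*F_w\otimes L_v^*L_w$ and using $L_v^*L_w\in\{L_x,L_y^*,0\}$, one sees that $T_F^*T_F$ is a finite sum of terms of the shape $C\otimes I$, $C\otimes L_u$, $C\otimes L_u^*$ with $|u|\leq n$ --- a self-adjoint multi-Toeplitz operator of degree at most $n$, like $T_Q$, and in particular $R$-multi-Toeplitz by the Example of Section 2.

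It remains to match the remaining entries of the first block-row. For each fixed $k$ and every $m\geq\max(n,k)$, the $(0,k)$ block of $S(m)=\Phi_m^*\Phi_m$ equals $\sum_{l\geq 0}F_{l+k}^*\bigl(F_l\otimes I_d^{\otimes k}\bigr)$ --- a finite sum, since $F_{l+k}=0$ as soon as $l+k>n$ --- and this is exactly the $(0,k)$ block of $T_F^*T_F$; on the other hand, by Theorem \ref{thm 1} it equals the $(0,k)$ block $Q_{-k}$ of $T_Q$ (read as $0$ when $k>n$). Hence $T_Q$ and $T_F^*T_F$ have the same first block-row. Being self-adjoint $R$-multi-Toeplitz operators of degree at most $n$, each is completely determined by its first block-row --- the remaining blocks being dictated by the multi-Toeplitz pattern displayed in Section 2 --- so $T_Q=T_F^*T_F$ with $T_F=\sum_{0\leq|v|\leq n}F_v\otimes L_v$, as required.

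The step I expect to demand the most care is the bookkeeping: making the ampliation and $\mathrm{col}/\mathrm{row}$ identifications --- the meaning of $q_j\otimes I_d$, the block indexing of $S(m)$ and of $T_F$, the passage between $F_j$ and the individual $F_w$ --- precise enough that the entry-by-entry comparisons above are literally valid, and spelling out the elementary fact that a self-adjoint $R$-multi-Toeplitz operator of degree at most $n$ is recovered from its first block-row. The conceptual content is contained entirely in Theorem \ref{thm 1} and Corollary \ref{cor}.
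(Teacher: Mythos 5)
Your proposal is correct and follows essentially the same route as the paper: extract the coefficients $F_w$ from the Schur-complement factorization of Corollary \ref{cor}, use Theorem \ref{thm 1} (with $Q_j=0$ for $j\geq n+1$) to identify the first block row of $S(m)$ with $(Q_0,Q_{-1},\dots,Q_{-n},0,\dots)$, and then invoke self-adjointness together with the $R$-multi-Toeplitz structure to upgrade equality of first block rows to $T_Q=T_F^*T_F$. Your extra observations (that the $F_j$ obtained at successive stages are nested and vanish for $j>n$, and the explicit check that $T_F^*T_F$ is $R$-multi-Toeplitz) just make explicit bookkeeping the paper leaves implicit.
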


\begin{proof}
Note that this operator, $T_Q$ is $R$ multi-Toeplitz operator. Let us consider the matrix representation of this operator, $T_Q$ corresponding to basis, $\beta=\{e_i\otimes \xi^v\}_{i,v}$ where $\{e_i\}_{i}$ is some orthonormal basis for $\mathcal{H}$ and  $v\in\mathcal{F}_d^+$. It is a multi-toeplitz operator matrix (corresponding to polynomial (\ref{main example})) as was defined on page \pageref{matrixToRefLater} with operator entries $Q_j$ for $j\in\mathcal{F}_d^+$ instead. Then as done before, after relabeling the indexes we get the multi-Toeplitz matrix:

\vskip.2in
 \[T_Q =\begin{bmatrix}
        Q_0 & Q_{-1} & Q_{-2} & \cdots & Q_{-n} & 0 & 0 & \cdots\\
        Q_{1} & Q_0\otimes I_d & Q_{-1} \otimes I_d & \cdots & Q_{-(n-1)} \otimes I_d  & Q_{-n} \otimes I_d & 0 & \cdots\\
         Q_{2} & Q_{1} \otimes I_d & Q_0\otimes I_d\otimes I_d & \cdots &  Q_{-(n-2)} \otimes I_d \otimes I_d  & Q_{-(n-1)} \otimes I_d \otimes I_d & Q_{-n} \otimes I_d^{\otimes 2} & \cdots\\
        \vdots & \vdots & \vdots & \ddots & \vdots  & \vdots & \vdots & \ddots\\
         Q_{n} & Q_{n-1} \otimes I_d & Q_{n-2}\otimes I_d\otimes I_d & \cdots &  Q_0 \otimes I_d \otimes \cdots\otimes I_d  & Q_{-1} \otimes I_d^{\otimes n} & Q_{-2} \otimes I_d^{\otimes n} & \ddots\\
         0 & Q_{n} \otimes I_d & Q_{n-1}\otimes I_d\otimes I_d & \cdots &  Q_1 \otimes I_d^{\otimes n}  & Q_{0} \otimes I_d^{\otimes (n+1)} & Q_{-1} \otimes I_d^{\otimes (n+1)} & \ddots\\
         0 & 0 & Q_{n}\otimes I_d\otimes I_d & \cdots &  Q_2 \otimes I_d^{\otimes n}  & Q_{1} \otimes I_d^{\otimes (n+1)} & Q_{0} \otimes I_d^{\otimes (n+2)} & \ddots\\
         \vdots & \vdots & \vdots & \ddots & \ddots  & \ddots & \ddots & \ddots\\
         
      \end{bmatrix}.\]
\vskip.2in
In this case we have that $Q_{j}=0$ for $|j|\geq n+1$.\newline
\vspace{2mm}\newline
Now consider the schur complement, $S(n)=\begin{bmatrix}
            A & B^* \\
            B & C
\end{bmatrix}$ (that is, supported on first $n+1$ rows and columns of $T_Q$). \\
\vskip.2in

Thus from theorem $\ref{thm 1}$ we get that 
\[
A=Q_0\mbox{ and }B = col(Q_i)_{i=1}^{n}.
\]
\vskip.2in

Comparing the first row of $S(n)$ above with the first row of the product  in corollary $\ref{cor}$ factorization we get that:\\
\vskip.2in
\begin{align*}
   Q_0 &= \sum_{j=0}^n F_j^*F_j;\\
Q_{-j} &= \sum_{k=j}^{n}F_k^* (F_{k-j}\otimes I_d^{\otimes j})\mbox{ for }1\leq j \leq n. 
\end{align*}

\vskip.2in

These $F_i$ for $i=0,\:\cdots\:,n$ are known from corollary $\ref{cor}$.




    

Due to the Toeplitz structure and self adjointness of the matrix, $T_Q$, the above equations give us the following factorization:
\[
 T_Q = T_F^*T_F 
\]
where 
{\fontsize{10}{4}
\[
 T_F\, =\, \begin{pmatrix}
            F_0 &  &  &  & & & & & &\\
            F_1 & F_0\otimes I_d &  &  & & & & & &\\
            F_2 & F_1\otimes I_d & F_0\otimes I_d^{\otimes 2} &  & & & & & &\\
            \vdots & \vdots & \vdots & \ddots & & & & &\\
            F_n & F_{n-1}\otimes I_d & F_{n-2}\otimes I_d^{\otimes 2} & \cdots & F_0\otimes I_d^{\otimes n} & & & & &\\
            0 & F_{n}\otimes I_d & F_{n-1}\otimes I_d^{\otimes 2} & \cdots & F_1\otimes I_d^{\otimes n} & F_0\otimes I_d^{\otimes (n+1)} & & & &\\
            0 & 0 & F_{n}\otimes I_d^{\otimes 2} & \cdots & F_2\otimes I_d^{\otimes n} & F_1\otimes I_d^{\otimes (n+1)} & F_{0}\otimes I_d^{\otimes (n+2)}& & &\\
            \vdots & \vdots & \ddots & \ddots & \ddots & \ddots & \ddots & \ddots & &
            \end{pmatrix}.
 \]}
 \vskip.2in

In compact representation of $T_F$, recall $F_i = col(F_v)_{|v|=i}$. Thus the operator corresponding to the above matrix is,
 \[T_F = \sum_{0\leq |v|\leq n}F_v\otimes L_v.\]
 \vskip.2in
 

 Therefore, we have a multi-toeplitz factorization for a multi-toeplitz positive semidefinite matrix, $T_Q$ which is of the form $T_F^*T_F$.
\end{proof}
\vskip.2in

$\textbf{Remark}$: The above polynomial operator, $T_Q$ (since its a polynomial in the left shifts, $L$)  
is $R$-multi-Toeplitz operator. 
Analogously, we have similar result for the polynomial operators in right shifts.
\end{section}
\vskip.2in
\bibliographystyle{IEEEtran}
 \bibliography{bibliography.bib}

\end{document}